\newtheorem{theorem}{Theorem}
\newtheorem{remark}{Remark}
\newtheorem{corollary}{Corollary}
\def\endpf{{\ \hfill\hbox{\vrule width1.0ex height1.0ex}\parfillskip 0pt
	}}
	\newenvironment{proof}{\noindent{\bf Proof:}}{\endpf}
\begin{document}	
\title{Steady-state optimization of an  exhaustive L\'evy storage process with intermittent output and random output rate}
\author{Royi Jacobovic\thanks{Department of Statistics and Data Science; The Hebrew University of Jerusalem; Jerusalem 9190501; Israel.
		{\tt royi.jacobovic@mail.huji.ac.il/offer.kella@gmail.com}}\and Offer Kella\footnotemark[1] \thanks{Supported by grant No. 1647/17 from the Israel Science Foundation and
		the Vigevani Chair in Statistics.}}
\maketitle
\begin{abstract}
	Consider a regenerative storage process with a nondecreasing L\'evy input (subordinator) such that every cycle may be split into two periods. In the first (off) the output is shut off and the workload accumulates. This continues until some stopping time. In the second (on), the process evolves like a subordinator minus a positive drift (output rate) until it hits the origin. In addition, we assume that the output rate of every on period is a random variable which is determined at the beginning of this period. For example, at each period, the output rate may depend on the workload level at the beginning of the corresponding busy period. We derive the Laplace-Stieltjes transform of the steady state distribution of the workload process and then apply this result to solve a steady-state cost minimization problem with holding, setup and output capacity costs. It is shown that the optimal output rate is a nondecreasing deterministic function of the workload level at the beginning of the corresponding on period. 
\end{abstract}

\bigskip
\noindent {\bf Keywords:} L\'evy driven queues. Queues with server vacations. Output rate control. Optimization with partial information.     

\bigskip
\noindent {\bf AMS Subject Classification (MSC2010):} 60K25, 90B22, 90C15.

\section{Introduction}
Motivated by M/G/1 queues with server vacations, \cite{kella1998} considers a regenerative storage process with a nondecreasing L\'evy input (subordinator). In particular, the author assumes that every cycle has two periods. In the first period (off) the output is shut off and hence the workload accumulates. This continues until some stopping time. Then, at the second period (on) the process evolves like a subordinator minus a positive drift (output rate) until it hits the origin. This paper is about a related model in which the output rate is a random variable which changes between cycles and is determined at the beginning of every on period. For example, the output rate may be a function of the workload level or the number of customers at the beginning of every on period or may be dependent (or independent) on any other information accumulated during the preceding off period. Two possible examples for this model are the following:
\begin{enumerate}
	\item A machine when once its workload is depleted it undergoes some maintenance and therefore experiences a down time. During this down time, potential work flows in but is not processed until this down time ends and therefore accumulates in storage. Information about the maintenance process and the arrival of new jobs is gathered during this down time, at the end of which the operator needs to decide at which speed to set the machine. After this, it is left to its own devices until it signals that its buffer is empty again. At this time the procedure is repeated.
	
	\item A water dam in which water accumulates. When water reaches some level, which remains fixed,  an operator must set a release rate which remains fixed until the water reaches some lower level. At that time, this process repeats. The release rate in each cycle can randomly depend on the history of rainfalls and water level since the previous time when the lower level was reached.    
\end{enumerate}
 In what follows, we derive the Laplace-Stieltjes transform (LST) of the steady-state workload of such a system. Then,  a steady-state cost minimization problem is studied, where, it is assumed that the workload level at the beginning of every on period is observed. In addition, there are constant holding, setup and output capacity cost rate which create a non-trivial trade-off which has to be balanced by choosing an output rate for the upcoming on period. Once the output rate is chosen, it cannot be changed until the end of the on period. It is shown that there exists an optimal output rate policy which is a nondecreasing deterministic function of the initial workload level. In addition, a two phase approach is applied to solve this optimization problem. It is shown that this approach is valid even when the objective functional is not convex. Furthermore, we show that this approach is applicable to a similar problem where the only available information is the number of customers in the system at the beginning of each on period. It is assumed that the service demands of these customers are unknown and hence we refer to this problem as an optimization with partial information. The rest of this work is organized as follows: Section \ref{sec: general model} is about on finding the steady state LST of the workload process. In section \ref{sec: optimization} we present the general statement of the cost-minimization problem along with a two-phase approach for solving it. Section \ref{sec: examples} contains two queueing examples including numerical illustrations. In Section \ref{sec: partial information} we show that the problem with partial information can be similarly solved.

\subsection{Related literature}
	 A summary of recent developments regarding L\'evy queues is given by \cite{debicki2015} and a more general treatment of L\'evy processes with some applications to queueing theory and other fields is provided by \cite{kypryanou2006}. In addition, recent surveys regarding queueing models with server vacations are given by e.g.  \cite{ke2010,tian2006}.  Related models discuss L\'evy queues or more generally L\'evy driven storage process with an exponent which is adapted over time with respect to some given mechanism. For example, \cite{bekker2009} discusses a L\'evy queue with hysteretic control. \cite{bekker2008} provides an analysis of a L\'evy driven model such that whenever the workload crosses some fixed level, a timer with a constant length is activated and when it expires the exponent is changed, unless the origin had already been hit before. \cite{bekker2009a} discusses a L\'evy queue with feedback information. In addition, this work considers the case where the exponent might be changed at Poisson arrivals. Another work about a L\'evy queue which evolves between exponential timers is \cite{palmowski2011}. In addition \cite{asmussen2000} discusses the case where the exponent is controlled by an external Markovian environment. A special case of the current model is when the output rate is determined as a function of the length of the previous vacation. This makes the current paper related to \cite{boxma2008,boxma2010} which consider the opposite case, i.e., when the vacations lengths depend on the evolution of the workload process during the previous busy period.
	 
\section{Model setup}\label{sec: general model}
Consider a probability space $(\Omega,\mathcal{F},P)$ with a filtration $\mathbb{F}=(\mathcal{F}_t)_{t\geq0}$  satisfying the usual conditions (right-continuous and augmented).
With respect to this filtration, let $J\equiv(J_t)_{t\geq0}$ be a finite-mean subordinator. Specifically, it is a nondecreasing right-continuous process which starts at the origin and has stationary independent increments. It is well known that $Ee^{-\alpha J_t}=e^{-\eta(\alpha)t}$ for $\alpha,t\geq0$, where  

\begin{equation}
\eta(\alpha)=c \alpha+\int_{(0,\infty)}\left(1-e^{-\alpha x}\right)\nu({\rm d}x)\ \ , \ \ \forall\alpha\geq0\,,
\end{equation}
$c\geq0$ and $\nu$ is the associated L\'evy measure satisfying $\int_{(0,\infty)}x\wedge1\nu({\rm d}x)<\infty$. Denote 
\begin{equation}
\rho\equiv EJ_1=\eta'(0)=c+\int_{(0,\infty)}x\nu({\rm d}x)=c+\int_0^\infty\nu\left((x,\infty)\right){\rm d}x
\end{equation}
and consider a regenerative storage system with the following evolution during the first cycle. The input of the storage system is $J$ which is described above. At time $t=0$ the system is off, i.e., there is no output at that time and the workload accumulates. This continues until some stopping time $\tau$ for which $J(\tau)>0$ with probability one and $E\tau<\infty$. In addition, note that, $\tau$ is a stopping time with respect to the filtration $\mathbb{F}$ which may be richer than the filtration which is generated by $J$. Now, at epoch $\tau$ a random rate $R$ is chosen. It is assumed that $R>\rho$ with probability one. Then, the workload process $\{J_t-R(t-\tau);t\geq\tau\}$ continues until $\tau+T^\tau$ which is the first time that it hits the origin. We denote this process by $W^\tau\equiv W^\tau_t$. In particular, the process $W^\tau$ restricted to off periods is the same as the L\'evy-driven clearing process $Z^\tau\equiv Z^\tau_t$ which was discussed in Section 3 of \cite{kella1998}. In addition, the process $W^\tau$ restricted to on periods is denoted by $Y^\tau\equiv Y_t^\tau$. It is possible to apply the results of Sections 3 and 4 of \cite{kella1998} in order to derive the LST of the steady state distribution of $W^\tau$. Let $\tilde{Z}^\tau$ and $\tilde{Y}^\tau$ be the steady state distributions of $Z^\tau$ and $Y^\tau$ respectively.  Thus, we shall deduce that the LST of the steady state workload process $\tilde{W}^\tau$ is given by 

\begin{equation} \label{eq: gen model LST}
\tilde{W}^\tau(\alpha)=\frac{E\tau}{E\tau+E\frac{J_\tau}{R-\rho}}\tilde{Z}^\tau(\alpha)+\frac{E\frac{J_\tau}{R-\rho}}{E\tau+E\frac{J_\tau}{R-\rho}}\tilde{Y}^\tau(\alpha)\ \ , \ \ \forall\alpha>0
\end{equation}
where $S(\alpha)=Ee^{-\alpha S}$ is a notation for the LST of a nonnegative random variable $S$ at $\alpha>0$. Now, the LST of $\tilde{Z}^\tau$ is given by Theorem 3.2 of \cite{kella1998} so it is left to figure the LST of $\tilde{Y}^\tau$. To this end, denote $V=J_\tau$ and whenever $E\frac{V}{R-\rho}<\infty$, let
\begin{equation}
P_{V,R}(A)\equiv \frac{E\frac{V}{R-\rho}1_A}{E\frac{V}{R-\rho}} \ \ , \ \ \forall A\in\mathcal{F}\,. 
\end{equation}
We denote the expectation with respect to $P_{V,R}(A)$ by $E_{V,R}$, i.e., 
\begin{equation}
E_{V,R}S=\frac{E\left(\frac{V}{R-\rho}\cdot S\right)}{E\left(\frac{V}{R-\rho}\right)}
\end{equation}
for every random variable $S$ for which this expectation exists. 

\begin{figure}[H] 
	\centering
	\textbf{Figure 1}
	\begin{tikzpicture}
	\draw[->] (-2,0)--(-2,2) node [anchor=south] {$\text{Workload}$};
	\draw[->] (-2,0)--(7.5,0);
	
	\draw (-2,0)--(-2,0) node [anchor=north] {$0$};
	\draw (7.5,0)--(7.5,0) node [anchor=north] {$\text{Time}$};
	\draw[line width=0.5mm] (-2,0)--(-1,0);
	\draw[dashed, line width=0.5mm] (-1,0)--(-1,0.75);
	\draw[line width=0.5mm] (-1,0.75)--(0,0.75);
	\draw[dashed,line width=0.5mm] (0,0.75)--(0,1.25);
	\draw[line width=0.5mm] (0,1.25)--(0.75,1.25);
	\draw[line width=0.5mm] (0.75,1.25)--(2,0);
	\draw (0.5,0.7)--(0.5,0.7) node [anchor=north] {$1$};

	\draw[line width=0.5mm] (2,0)--(2.5,0);
	\draw[dashed, line width=0.5mm] (2.5,0)--(2.5,0.5);
	\draw[line width=0.5mm] (2.5,0.5)--(3,0.5);
	\draw[dashed, line width=0.5mm] (3,0.5) -- (3,1);
	
	\draw[line width=0.5mm] (3,1)--(4,0.5);
	\draw[dashed, line width=0.5mm] (4,0.5)--(4,1);
	\draw[line width=0.5mm] (4,1)--(5.5,0.25);
	\draw[dashed, line width=0.5mm] (5.5,0.25)--(5.5,0.75);
	\draw[line width=0.5mm] (5.5,0.75)--(7,0); 
	\draw[line width=0.5mm] (7,0)--(7.48,0);
	\draw[line width=0.5mm] (4.35,0.7) -- (4.35,0.7) node [anchor=north] {$2$};
	
	\end{tikzpicture}
	\caption*{Sample path during the first two cycles. Note: output rates during the first and second cycles are different.} 
	\label{fig: path}
\end{figure}
\begin{theorem} \label{thm: LST-formula}
	Let  $U,N,e_1,e_2,\ldots$ be independent random variables such that $U\sim U[0,1]$ and $\{e_n;n\geq1\}$ is an i.i.d sequence such that $e_1(\alpha)=\frac{\eta(\alpha)}{\rho\alpha}$ for every $\alpha\geq0$. In addition, $U,e_1,e_2,\ldots$ are independent of $(V,R)$ and $\left(N+1\right)|(V,R)\sim \text{Geometric}\ (1-\frac{\rho}{R})$. Then,   
	\begin{align}
	\tilde{Y}^{\tau}(\alpha)&=\frac{E\left[\frac{1-e^{-\alpha V}}{\alpha R-\eta(\alpha)}\right]}{E\left(\frac{V}{R-\rho}\right)}\label{eq: LST1}\\&=E_{V,R}  \exp{\left\{-\alpha\left(UV+\sum_{i=1}^Ne_i\right)\right\}}\label{eq: LST2}\ \ , \ \ \forall \alpha\geq0\,.
	\end{align}
\end{theorem}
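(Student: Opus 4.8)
The plan is to realize the stationary law $\tilde Y^\tau$ as a normalized occupation measure of a single on period, to evaluate that occupation integral by a Kella--Whitt type martingale argument after conditioning on the pair $(V,R)=(J_\tau,R)$ that is frozen at the start of the period, and finally to turn the resulting transform \eqref{eq: LST1} into the probabilistic decomposition \eqref{eq: LST2} by a routine algebraic manipulation. To begin, I would use the regenerative (renewal--reward) structure: restricted to on periods the workload regenerates at the successive epochs where it hits the origin, so for every $\alpha\ge0$
\[
\tilde Y^\tau(\alpha)=\frac{E\int_0^{T^\tau}e^{-\alpha Y^\tau_t}\,{\rm d}t}{ET^\tau},
\]
where $T^\tau$ is the length of a generic on period. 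Conditioning on $(V,R)=(v,r)$, the strong Markov property and the independent increments of $J$ make the on period a fresh copy of $\{v+J_t-rt\}$ run until its first hitting time $T$ of $0$; since the only downward motion is the drift, the path creeps to $0$ and $Y_T=0$ with no overshoot. Taking conditional expectations in $rT=v+J_T$ and invoking Wald ($EJ_T=\rho\,ET$) gives $E[T^\tau\mid V=v,R=r]=v/(r-\rho)$, hence $ET^\tau=E\tfrac{V}{R-\rho}$, the announced denominator.

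For the numerator I would fix $(v,r)$ and set $\psi(\alpha)=\alpha r-\eta(\alpha)$. From $Ee^{-\alpha J_t}=e^{-\eta(\alpha)t}$ one reads off that the extended generator $\mathcal A$ of $Y$ acts on exponentials by $\mathcal A e^{-\alpha\,\cdot}=\psi(\alpha)e^{-\alpha\,\cdot}$, so that
\[
M_t=e^{-\alpha Y_{t}}-e^{-\alpha v}-\psi(\alpha)\int_0^{t}e^{-\alpha Y_s}\,{\rm d}s
\]
is a mean-zero (local) martingale. Stopping at $T$ and letting $t\to\infty$, with $Y_T=0$, yields $1-e^{-\alpha v}=\psi(\alpha)\,E\!\left[\int_0^T e^{-\alpha Y_s}\,{\rm d}s\mid V=v,R=r\right]$, i.e. the conditional occupation integral equals $\frac{1-e^{-\alpha v}}{\alpha r-\eta(\alpha)}$. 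I expect the one genuine technical point to be the justification of this optional stopping: one first checks $\psi(\alpha)>0$ for $\alpha>0$ (because $\eta$ is concave with $\eta'(0)=\rho$, so $\eta(\alpha)\le\rho\alpha$), and then dominates $M_{t\wedge T}$ by the integrable bound $2+\psi(\alpha)T$ to pass to the limit. Averaging $\frac{1-e^{-\alpha V}}{\alpha R-\eta(\alpha)}$ over $(V,R)$ and dividing by $ET^\tau$ produces \eqref{eq: LST1}. (Equivalently, one may quote Theorem 3.2 and the busy-period analysis of \cite{kella1998} conditionally on $(V,R)$ and then integrate.)

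The remaining identity \eqref{eq: LST2} is a purely algebraic rewriting of \eqref{eq: LST1}. Reading $E_{V,R}$ as the expectation under the size-biasing density $\frac{V/(R-\rho)}{E[V/(R-\rho)]}$, I would compute the conditional transform of $UV+\sum_{i=1}^N e_i$ given $(V,R)$ using the stated independence: $E[e^{-\alpha UV}\mid V]=\int_0^1 e^{-\alpha uV}\,{\rm d}u=\frac{1-e^{-\alpha V}}{\alpha V}$, while the geometric sum gives $E[(e_1(\alpha))^{N}\mid R]=\frac{(R-\rho)\alpha}{\alpha R-\eta(\alpha)}$ after inserting $e_1(\alpha)=\eta(\alpha)/(\rho\alpha)$ and the success probability $1-\rho/R$. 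Multiplying these two factors and integrating against the size-biasing density cancels the $V/(R-\rho)$ weight and reproduces exactly $E[\frac{1-e^{-\alpha V}}{\alpha R-\eta(\alpha)}]\big/E\frac{V}{R-\rho}$, which is \eqref{eq: LST1}. Along the way I would note that $e_1(\alpha)=\eta(\alpha)/(\rho\alpha)\le1$ is the LST of the equilibrium (stationary-excess) law of $\nu$, so that $N$, $U$ and the $e_i$ are all bona fide, and the two displayed expressions for $\tilde Y^\tau(\alpha)$ indeed coincide.
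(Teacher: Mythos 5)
Your proposal is correct and follows essentially the same route as the paper: conditionally on $(V,R)$, a Kella--Whitt martingale stopped at $T^\tau\wedge t$ yields the occupation integral $\frac{1-e^{-\alpha V}}{\alpha R-\eta(\alpha)}$, the identity $E\left[T^\tau\mid V,R\right]=\frac{V}{R-\rho}$ gives the normalization, and the same size-biasing/geometric-series algebra links \eqref{eq: LST1} and \eqref{eq: LST2}. The differences are cosmetic: you derive the mean on-period length via Wald's identity rather than quoting Eq.~2.10 of \cite{kella1998}, you spell out the optional-stopping justification (and correctly keep the $e^{-\alpha Y_t}$ term in the martingale, which the paper's display drops), and you verify \eqref{eq: LST2} by computing its right-hand side instead of rewriting \eqref{eq: LST1} --- the same algebra run in reverse.
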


\begin{proof}
	Theorem 2 of \cite{kella1992} implies that given $(V,R)$ 
	\begin{equation*}
	\left[\alpha R-\eta(\alpha)\right]\int_0^te^{-\alpha Y^\tau _s}{\rm d}s+e^{-\alpha V}-1\ \ , \ \ \forall t\geq0
	\end{equation*}
	is a zero-mean martingale. Therefore, by the optional sampling theorem with the stopping time $T^\tau\wedge t$ we have upon taking $t\uparrow\infty$ (monotone convergence) 
	\begin{equation*}
	E\left[\int_0^{T^\tau}e^{-\alpha Y^\tau _s}{\rm d}s\bigg|V,R\right]=\frac{1-e^{-\alpha V}}{\alpha R-\eta(\alpha)}\ \ , \ \ P\text{-a.s.}
	\end{equation*}  
	In addition, it is known (see Equation 2.10 of \cite{kella1998}) that  
	\begin{equation*}
	E\left[T^\tau\bigg|(V,R)\right]=\frac{V}{R-\rho}\ \ , \ \ P\text{-a.s.}
	\end{equation*} 
	Thus, conditioning and un-conditioning with respect to $(V,R)$ imply \eqref{eq: LST1}. To prove \eqref{eq: LST2}, observe that for every $\alpha>0$
	\begin{align}
	\tilde{Y}^{\tau}(\alpha)\nonumber&=\frac{E\left[\frac{1-e^{-\alpha V}}{\alpha R-\eta(\alpha)}\right]}{E\left(\frac{V}{R-\rho}\right)}\\&=\frac{E\left[\frac{1-e^{-\alpha V}}{\alpha V}\cdot\frac{\alpha\left(R-\rho\right)}{\alpha R-\eta(\alpha)}\cdot\frac{V}{R-\rho}\right]}{E\left(\frac{V}{R-\rho}\right)}\\&=E_{V,R}\left(\frac{1-e^{-\alpha V}}{\alpha V}\cdot\frac{1-\frac{\rho}{R}}{1-\frac{\rho}{R}\cdot\frac{\eta(\alpha)}{\rho\alpha}}\right)\nonumber\\&=E_{V,R}\left[e^{-\alpha UV}\sum_{n=0}^\infty\left(1-\frac{\rho}{R}\right)\left(\frac{\rho}{R}\right)^n\left(Ee^{-\alpha e_1}\right)^n\right]\nonumber\\&=E_{V,R}e^{-\alpha\left(UV+\sum_{i=1}^Ne_i\right)}\nonumber
	\end{align}
	and the proof is complete.
\end{proof}
\\ \\
The following is immediate. 

\begin{corollary}\label{cor: expectation formula}
	Denote 
	\begin{equation} \label{eq: mu}
	\mu\equiv Ee_1=-\frac{\eta''(0)}{2\rho}=\frac{\int_{(0,\infty)}x^2\nu({\rm d}x)}{2\rho}\,.
	\end{equation}
	 Then, 
	\begin{equation}
	E\tilde{Y}^\tau=E_{V,R}\left(UV+\sum_{n=1}^Ne_i\right)=\frac{E\left[\frac{V^2}{2(R-\rho)}+\mu\rho\frac{V}{(R-\rho)^2}\right]}{E\left(\frac{V}{R-\rho}\right)}
	\end{equation}
\end{corollary}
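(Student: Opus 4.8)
The plan is to establish the two asserted identities in turn: first the value of $\mu=Ee_1$, and then the formula for $E\tilde{Y}^\tau$.

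For $\mu$, I would work directly from the defining LST $Ee^{-\alpha e_1}=\eta(\alpha)/(\rho\alpha)$. Since $e_1\geq0$, we have $Ee_1=-\frac{d}{d\alpha}\big|_{\alpha=0^+}\frac{\eta(\alpha)}{\rho\alpha}$, so the cleanest route is a second-order expansion of $\eta$ at the origin. Because $\eta(0)=0$ and $\eta'(0)=\rho$, one gets $\eta(\alpha)=\rho\alpha+\tfrac12\eta''(0)\alpha^2+o(\alpha^2)$, whence $\frac{\eta(\alpha)}{\rho\alpha}=1+\frac{\eta''(0)}{2\rho}\alpha+o(\alpha)$ and therefore $Ee_1=-\eta''(0)/(2\rho)$. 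Differentiating the L\'evy--Khintchine form twice under the integral sign gives $\eta''(\alpha)=-\int_{(0,\infty)}x^2e^{-\alpha x}\nu(dx)$, so $\eta''(0)=-\int_{(0,\infty)}x^2\nu(dx)$, which yields the stated expression for $\mu$. The only thing to verify is that differentiation under the integral is licit at $\alpha=0$, i.e. that $\int_{(0,\infty)}x^2\nu(dx)<\infty$; I would note that this second-moment condition is tacitly required for the quantities in the statement to be finite.

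For the mean formula, the starting point is the representation \eqref{eq: LST2}. Since $E\tilde{Y}^\tau=-\frac{d}{d\alpha}\big|_{\alpha=0^+}\tilde{Y}^\tau(\alpha)$, interchanging the derivative with the $E_{V,R}$-expectation gives $E\tilde{Y}^\tau=E_{V,R}(UV+\sum_{i=1}^N e_i)$, which is the first asserted equality. I would then split this into $E_{V,R}(UV)$ and $E_{V,R}(\sum_{i=1}^N e_i)$ and evaluate each using $E_{V,R}S=E(\frac{V}{R-\rho}S)/E(\frac{V}{R-\rho})$. For the first term, the independence of $U$ from $(V,R)$ together with $EU=\tfrac12$ gives $E(\frac{V}{R-\rho}\,UV)=\tfrac12 E(\frac{V^2}{R-\rho})$, producing the $\frac{V^2}{2(R-\rho)}$ contribution.

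The second term is where the conditioning structure must be handled with care. Conditioning on $(V,R)$ and applying Wald's identity (the $e_i$ are i.i.d. with mean $\mu$ and, given $(V,R)$, independent of $N$) yields $E[\sum_{i=1}^N e_i\mid V,R]=\mu\,E[N\mid V,R]$. Since $(N+1)\mid(V,R)\sim\text{Geometric}(1-\rho/R)$ has conditional mean $R/(R-\rho)$, it follows that $E[N\mid V,R]=\rho/(R-\rho)$, so $E[\sum_{i=1}^N e_i\mid V,R]=\mu\rho/(R-\rho)$. Multiplying by $\frac{V}{R-\rho}$, taking expectations, and dividing by $E(\frac{V}{R-\rho})$ gives the $\mu\rho\frac{V}{(R-\rho)^2}$ contribution, and summing the two terms produces the claimed formula. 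The main point requiring attention is the bookkeeping of the tilted expectation $E_{V,R}$ alongside the mixed (in)dependence --- $U$ and the $e_i$ independent of $(V,R)$, but $N$ only conditionally geometric given $(V,R)$ --- together with the integrability needed to pass from the LST to its derivative at zero; everything else is a routine computation, consistent with the authors labelling the result as immediate.
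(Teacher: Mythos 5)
Your proof is correct and is exactly the computation the paper has in mind: the paper offers no explicit proof (it calls the corollary "immediate" from the representation \eqref{eq: LST2}), and your argument — Taylor expansion of $\eta(\alpha)/(\rho\alpha)$ at the origin for $\mu$, then differentiation of \eqref{eq: LST2} at $\alpha=0$ followed by $EU=\tfrac12$, Wald's identity, and $E[N\mid V,R]=\rho/(R-\rho)$ under the tilted measure $E_{V,R}$ — is precisely the spelled-out version of that immediate derivation. The only refinement worth noting is that the interchange of derivative and expectation needs no extra integrability hypothesis, since $\frac{1-e^{-\alpha s}}{\alpha}\uparrow s$ as $\alpha\downarrow 0$ makes it a monotone-convergence fact, valid even when the moments are infinite.
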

We note that the case where $R=\rho+sV$ with $s>0$, Theorem \ref{thm: LST-formula} implies that 
\begin{equation} \label{eq: inspection paradox}
E\tilde{Y}^\tau=\frac{EV}{2}+\frac{\mu\rho}{s}EV^{-1}\xrightarrow[s\to\infty]{}\frac{EV}{2}\leq\frac{EV^2}{2EV}\,.
\end{equation}
The right side of \eqref{eq: inspection paradox} would be the limit if instead of taking $R=\rho+sV$ we took $R=\rho+sEV$. We also note that in both cases $E\frac{V}{R-\rho}=\frac{1}{s}$.

\section{Optimization} \label{sec: optimization}
Recall the model introduced by  Section \ref{sec: general model} and note that $V=J(\tau)$. In addition, the following are model assumptions:
\begin{enumerate}
	\item $EV^2<\infty$. 
	\item A constant holding cost $h\in(0,\infty)$ per unit of workload per unit of time. 
	\item A constant setup cost  $K\in(0,\infty)$ at the beginning of every busy period.
	\item A constant output capacity cost rate $d>0$. That is, if a rate $r$ is chosen for $t$ units of time, then the cost of the operator is $dtr$. This is to model the cost involved with higher consumption rate of resources as output rate is increased.	
	\item There exists a constant $\rho<r<\infty$ such that $\rho<R\leq r$, $P$-a.s. $r$ models the maximal output rate as a consequence of technological constraint. As mentioned in Remark \ref{remark: r=infinity}, it is required for our analysis to go through. 
\end{enumerate}
Our goal is to minimize the steady state expected cost, where the minimization is over all possible distributions of the random variable $R$. The general expression for the expected cost is defined as follows
\begin{align} \label{eq: C(R) definition}
C(R)&\equiv\frac{E\left[K+h\left(\int_0^\tau J_tdt+\int_\tau^{\tau+T^\tau}\left[J_t-R(t-\tau)\right] {\rm d}t\right)+dT^\tau R\right]}{E\left(\tau+T^\tau\right)}\nonumber\\&=\frac{K+h\left(E\tau E\tilde{Z}^\tau+ET^\tau E\tilde{Y}^\tau\right)+dE\left[RE\left(T^\tau|R,V\right)\right]}{E\tau+ET^\tau}\nonumber\\&=\frac{hE\tau E\tilde{Z}^\tau+K+dEV+d\rho E\frac{V}{R-\rho}+hE\left[\frac{V^2}{2(R-\rho)}+\mu\rho\frac{V}{(R-\rho)^2}\right]}{E\tau+E\frac{V}{R-\rho}}\,.
\end{align}
Note that the last equality is justified by Corollary \ref{cor: expectation formula} and the identity $E\left(T^\tau|R,V\right)=\frac{V}{R-\rho}$, $P$-a.s. To proceed, consider the change of variable
\begin{equation}\label{eq: change of variable}
X=V\left(\frac{1}{R-\rho}-\frac{1}{r-\rho}\right)
\end{equation}   
and let 
\begin{align}
K_1&\equiv hE\tau E\tilde{Z}^\tau+K+\left[d+\frac{d\rho}{r-\rho}+\frac{h\mu\rho}{(r-\rho)^2}\right]EV+\frac{h}{2(r-\rho)}EV^2\,,\nonumber\\K_2&\equiv d\rho+2\frac{h\mu\rho}{r-\rho}\,,\\K_3&\equiv E\tau+\frac{EV}{r-\rho}\,.\nonumber
\end{align}
Then, since $EV^2<\infty$ an equivalent optimization problem is:

\begin{equation} \label{optimization: general}
\begin{aligned}
& \min:
& & \frac{K_1+K_2EX+hE\left(\frac{V}{2}X+\frac{\mu\rho}{V} X^2\right)}{K_3+EX} \\
& \text{ s.t.}
& & X\in\mathcal{F} \\ & & & 0\leq X, \ \ P\text{-a.s.}
\end{aligned}
\end{equation}
where $X\in\mathcal{F}$ means that $X$ is a random variable. \\ \\ We solve this problem in two phases: 

\subsection*{Phase I}
Fix some  $\alpha>0$ and solve the problem under the additional constraint $EX=\alpha$. That is,

\begin{equation} \label{optimization: phase1}
\begin{aligned}
& {\min:}
& &  E\left[\frac{VX}{2}+\mu\rho\frac{X^2}{V}\right] \\
& \text{ s.t.}
& & X\in\mathcal{F}\\ &  & & 0\leq X,\ \ P\text{-a.s.} \\  & & & EX=\alpha\,. 
\end{aligned}
\end{equation}
Let $a^+=\max\{a,0\}$ and $a\wedge b=\min\{a,b\}$ for every $a,b\in\mathbb{R}$. In addition, for every $v>0$ and $\lambda\in\mathbb{R}$, define 

\begin{align}\label{eq: x definition}
&x(v,\lambda)=\frac{v\left(\lambda-\frac{v}{2}\right)^+}{2\mu\rho}=\frac{v\left(\lambda-\frac{v}{2}\right)}{2\mu\rho}1_{\left(\frac{v}{2},\infty\right)}(\lambda)\nonumber\,,\\& \xi(\lambda)=Ex(V,\lambda)\,,\\&f(\alpha)=\frac{1}{4\mu\rho}EV\left[\lambda_\alpha^2-\left(\frac{V}{2}\right)^2\right]^+ \nonumber
\end{align}
where $\lambda_\alpha$ is defined by the next theorem.
\begin{theorem} \label{thm: phase I}
	$\xi(\cdot)$ is continuous on $[0,\infty)$, there exists $\lambda_\alpha>0$ such that $\xi(\lambda_\alpha)=\alpha$ and $X=x(V,\lambda_\alpha)$ is a solution of \eqref{optimization: phase1}. $f(\alpha)$ is the optimal value of \eqref{optimization: phase1} which is finite, convex, non-decreasing and continuous on $[0,\infty)$.	
\end{theorem}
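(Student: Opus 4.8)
The plan is to recognize \eqref{optimization: phase1} as a convex program in the random variable $X$ and to solve it by Lagrangian relaxation of the single linear constraint $EX=\alpha$. For each fixed $v>0$ the integrand $g(v,x):=\frac{v}{2}x+\mu\rho\frac{x^2}{v}$ is strictly convex in $x$ (its second derivative $2\mu\rho/v$ is positive), so for a multiplier $\lambda$ the pointwise minimizer of $g(v,x)-\lambda x$ over $x\ge0$ is found by setting $\frac{v}{2}+\frac{2\mu\rho}{v}x-\lambda=0$ and projecting onto $[0,\infty)$, which gives precisely $x(v,\lambda)=\frac{v(\lambda-v/2)^+}{2\mu\rho}$. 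Hence $x(V,\lambda)$ minimizes the relaxed objective $E[g(V,X)-\lambda X]$ over all $X\ge0$, and the whole task reduces to tuning $\lambda$ so that the constraint is restored.

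First I would study $\xi(\lambda)=Ex(V,\lambda)=\frac{1}{2\mu\rho}E[V(\lambda-V/2)^+]$. The bound $(\lambda-v/2)^+\le\lambda$ gives $\xi(\lambda)\le\frac{\lambda}{2\mu\rho}EV<\infty$ (finite since $EV^2<\infty$ forces $EV<\infty$), while $\lambda\mapsto(\lambda-v/2)^+$ being convex, nondecreasing and continuous lets me pass expectations through (monotone/dominated convergence) to conclude that $\xi$ is convex, nondecreasing and continuous on $[0,\infty)$ with $\xi(0)=0$ and $\xi(\lambda)\uparrow\infty$. The intermediate value theorem then yields, for each $\alpha>0$, some $\lambda_\alpha$ with $\xi(\lambda_\alpha)=\alpha$. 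Moreover $\frac{d}{d\lambda}\xi(\lambda)=\frac{1}{2\mu\rho}E[V\,1_{\{V<2\lambda\}}]$ vanishes exactly while $2\lambda$ stays below the essential infimum $m$ of $V$ and is positive afterwards, so $\xi\equiv0$ on $[0,m/2]$ and is strictly increasing beyond; consequently $\lambda_\alpha$ is unique, strictly positive, and nondecreasing in $\alpha$.

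Given $\lambda_\alpha$, optimality of $X^{*}=x(V,\lambda_\alpha)$ follows from the standard weak-duality inequality together with the fact that the constraint is met: for any feasible $X$ (so $X\ge0$ and $EX=\alpha=EX^{*}$), $E[g(V,X)]=E[g(V,X)-\lambda_\alpha X]+\lambda_\alpha\alpha\ge E[g(V,X^{*})-\lambda_\alpha X^{*}]+\lambda_\alpha\alpha=E[g(V,X^{*})]$, the inequality being the pointwise minimization property of $x(V,\lambda_\alpha)$. Substituting $x(v,\lambda)$ into $g$ and simplifying gives $g(v,x(v,\lambda))=\frac{v}{4\mu\rho}(\lambda^2-(v/2)^2)^+$, whence the optimal value is $f(\alpha)=\frac{1}{4\mu\rho}E[V(\lambda_\alpha^2-(V/2)^2)^+]$, which is finite because it is dominated by $\frac{\lambda_\alpha^2}{4\mu\rho}EV$.

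It remains to record the regularity of $f$. Monotonicity is read off the formula: $\lambda_\alpha$ is nondecreasing in $\alpha$ and $(\lambda^2-(v/2)^2)^+$ is nondecreasing in $\lambda\ge0$. For convexity I would avoid the explicit formula and instead use the value-function (perturbation) argument: if $X_0,X_1$ are optimal for $\alpha_0,\alpha_1$ and $\theta\in[0,1]$, then $X_\theta=\theta X_0+(1-\theta)X_1$ satisfies $X_\theta\ge0$ and $EX_\theta=\theta\alpha_0+(1-\theta)\alpha_1$, so it is feasible for that value and convexity of $g(v,\cdot)$ yields $f(\theta\alpha_0+(1-\theta)\alpha_1)\le E[g(V,X_\theta)]\le\theta f(\alpha_0)+(1-\theta)f(\alpha_1)$. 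A finite convex function is automatically continuous on the open half-line $(0,\infty)$, so the only genuinely delicate point is right-continuity at $0$: there I would use $f(0)=0$ together with $\lambda_\alpha\downarrow m/2$ as $\alpha\downarrow0$ and dominated convergence (dominating function $V\lambda_{\alpha_0}^2$) to get $\lim_{\alpha\to0^+}f(\alpha)=\frac{1}{4\mu\rho}E[V((m/2)^2-(V/2)^2)^+]=0$. I expect this boundary analysis, and the clean verification that the weak-duality bound is attained, to be the only points requiring care; the identification of the pointwise minimizer and the interior convexity and monotonicity are routine once that minimizer is in hand.
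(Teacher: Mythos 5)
Your proof is correct, but it takes a genuinely more self-contained route than the paper. The paper disposes of the two substantive claims by citation: it observes that \eqref{optimization: phase1} is the quadratic program $\min E\left[AX^2+BX+C\right]$ with random coefficients $A=\mu\rho/V$, $B=V/2$, $C=0$, invokes Corollary 1 of \cite{Jacobovic2019} to conclude that $x(V,\lambda_\alpha)$ is optimal, and invokes Proposition 1 of \cite{Jacobovic2019} for the finiteness, convexity, monotonicity and continuity of $f$; only the elementary facts about $\xi$ (continuity and finiteness by domination, divergence by monotone convergence, hence existence of $\lambda_\alpha$) are proved inline, essentially as you do. You instead prove the outsourced steps directly: optimality of $x(V,\lambda_\alpha)$ via pointwise minimization of $g(v,x)-\lambda x$ plus weak duality, convexity of $f$ via the perturbation argument on convex combinations of optimizers, and continuity at $\alpha=0$ via $\lambda_\alpha\downarrow m/2$ and dominated convergence (continuity on $(0,\infty)$ being automatic for a finite convex function). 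Your route buys independence from the companion paper and some extra information not asserted in the theorem (uniqueness of $\lambda_\alpha$, strict monotonicity of $\xi$ beyond $m/2$); the paper's route is shorter and its general-purpose lemma also covers Remark \ref{remark: bounded rate}, where the extra constraint $X\le r_0V$ appears. Two minor points to tighten. First, your dominations $\xi(\lambda)\le\frac{\lambda}{2\mu\rho}EV$ and $f(\alpha)\le\frac{\lambda_\alpha^2}{4\mu\rho}EV$ rely on $EV<\infty$, whereas the paper uses $x(V,\lambda)\le\lambda^2/(\mu\rho)$, a bound uniform in $V$; this is what underlies Remark \ref{remark: r=infinity}'s claim that the theorem needs no moment assumption on $V$ at all, so your variant is valid under the section's standing assumption $EV^2<\infty$ but is slightly less general. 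Second, your formula $\frac{d}{d\lambda}\xi(\lambda)=\frac{1}{2\mu\rho}E\left[V1_{\{V<2\lambda\}}\right]$ is only a one-sided derivative at points where $P(V=2\lambda)>0$, and your weak-duality display implicitly splits $E\left[g(V,X)-\lambda_\alpha X\right]$ into $E\left[g(V,X)\right]-\lambda_\alpha EX$, which needs the (easy) observation that $g(V,X)-\lambda_\alpha X$ is bounded below by an integrable random variable so that all expectations are well defined; neither issue affects correctness.
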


We note that from \eqref{eq: change of variable} and \eqref{eq: x definition} it follows that for every $\alpha,v\in[0,\infty)$ the optimal output rate 

\begin{equation}
R=R(v,\alpha)=\rho+\left[\frac{1}{r-\rho}+\frac{\left(\lambda_\alpha-\frac{v}{2}\right)^+}{2\mu\rho}\right]^{-1}
\end{equation}
which is nondecreasing in $v$. In particular this holds for the optimal $\alpha$ resulting from Subsection \ref{subsec: phase II}. We also note that since $x(v,\lambda)$ is nondecreasing in $\lambda$, then $\lambda_\alpha$ is nondecreasing in $\alpha$ implying that $R(v,\alpha)$ is nonincreasing in $\alpha$.\newline
  
\begin{proof}
	Note that $\xi(0)=0$. Let $\lambda\geq0$ and notice that 
	\begin{equation} \label{eq: h upper bound}
	x(V,\lambda)=\frac{V\left(\lambda-\frac{V}{2}\right)^+}{2\mu\rho}\leq\frac{\lambda^2}{\mu\rho}<\infty\,.
	\end{equation}
	Thus, dominated convergence implies that $\xi(\cdot)$ is continuous and finite on $[0,\infty)$. In addition, monotone convergence implies that $\xi(\lambda)\rightarrow\infty$ as $\lambda\to\infty$ and hence there exists $\lambda_\alpha>0$ such that $\xi(\lambda_\alpha)=\alpha$.
	
	The optimization \eqref{optimization: phase1} is a special case of optimizing quadratic function with random coefficients. That is, 
	\begin{equation} \label{optimization: qudratic}
	\begin{aligned}
	& \min:
	& & E\left[AX^2+BX+C\right] \\
	& \text{ s.t.}
	& & X\in\mathcal{F}\\ & & & 0\leq X,\ \ P\text{-a.s.} \\  & & & EX=\alpha 
	\end{aligned}
	\end{equation}
	with $A=\frac{\mu\rho}{V}$, $B=\frac{V}{2}$ and $C=0$.
	Thus, the fact that $X=x(V,\lambda_\alpha)$ is an optimum follows directly from Corollary 1 of \cite{Jacobovic2019} (see also Section 4 there). Note that $f(\alpha)$ resulting by a substitution of $x\left(V,\lambda_\alpha\right)$ into the objective function of \eqref{optimization: qudratic} with the appropriate $A,B$ and $C$. Thus,
	\begin{equation}
		f(\alpha)=\frac{1}{4\mu\rho}EV\left[\lambda_\alpha^2-\left(\frac{V}{2}\right)^2\right]^+\leq\frac{\lambda_\alpha}{\mu\rho}<\infty\,. 
		\end{equation}
	Moreover, observe that for $X\geq0$, $P$-a.s., $0\leq\frac{VX}{2}+\mu\rho\frac{X^2}{V}$, $P$-a.s. 
	Therefore, all other results regarding $f(\cdot)$ follows directly by Proposition 1 of \cite{Jacobovic2019}.   
\end{proof}

\begin{remark}\label{remark: bounded rate}
		\normalfont If there exists a minimal output rate, that is $\rho<r_{\min}\leq R\ ,\ P\text{-a.s.}$, then it can be checked that $X\leq r_0V,\ , \ P\text{-a.s.}$ where $r_0=\left(r_{\min}-\rho\right)^{-1}-(r-\rho)^{-1}>0$. Thus, it is enough to consider $\alpha\in(0,r_0EV]$. In addition, the results of Section 4 of \cite{Jacobovic2019} can be applied directly to show that there exists $\lambda_\alpha\geq0$ such that $Ex(V,\lambda_\alpha)\wedge(r_0V)=\alpha$ and $x(V,\lambda_\alpha)\wedge(r_0V)$ is an optimum. In addition, Proposition 1 of \cite{Jacobovic2019} can be used to show that $f(\cdot)$ satisfies the same properties stated in Theorem \ref{thm: phase I}.    
\end{remark}

\begin{remark}\label{remark: r=infinity}
		\normalfont 
		The proof of Theorem \ref{thm: phase I} doesn't require the assumption $EV^2<\infty$. In fact, it requires only that $V$ is such that $P\left(V>0\right)>0$. Therefore, if $d=0$ and $r=\infty$, then  and hence the results of Theorem \ref{thm: phase I} remain valid with weaker assumptions on the distribution of $V$. Especially, note that $r=\infty$ models a situation where at the end of an off period the server can either choose any finite output rate  and starts an on period with this rate or to pick an infinite output rate, i.e. to perform a clearing and then to go back on a new off period.
\end{remark}

\subsection*{Phase II} \label{subsec: phase II}
Recall that for every $\alpha\geq0$ the optimal value $f(\alpha)$ of phase I is finite and $f(0)=0$. The second phase is the optimization problem
\begin{equation} \label{optimization: phase 2}
\begin{aligned}
& \min:
& & \frac{K_1+K_2\alpha+hf(\alpha)}{K_3+\alpha} \\
& \text{ s.t.}
& & \alpha\geq0
\end{aligned}
\end{equation}
In addition, for every $v\geq0$ and $\lambda\geq0$, let
	\begin{align}
	&a(\lambda,v)\equiv v\left(\lambda-\frac{v}{2}\right)^+\,,\\& b(\lambda,v)\equiv v\left[\lambda^2-\left(\frac{v}{2}\right)^2\right]^+\,,\nonumber\\&A(\lambda)\equiv Ea(\lambda,V)\,,\nonumber\\&B(\lambda)=Eb(\lambda,V)\,,\nonumber\\&
	\tilde{a}(\lambda)\equiv EV1_{[0,2\lambda]}(V)\,.\nonumber
	\end{align}
and define 
	\begin{equation}\label{eq: g definition}
	G(\lambda)\equiv\frac{K_1+K_2\xi(\lambda)+hf\left[\xi(\lambda)\right]}{K_3+\xi(\lambda)}=\frac{K_1+\frac{K_2}{2\mu\rho}A(\lambda)+\frac{h}{4\mu\rho}B(\lambda)}{K_3+\frac{A(\lambda)}{2\mu\rho}}\,.
	\end{equation} 
Since $\xi(\cdot)$ is a nondecreasing continuous function such that $\xi(0)=0$ and $\xi(\lambda)\to\infty$ as $\lambda\to\infty$, then,
in order to solve \eqref{optimization: general}, it suffices to minimize $G(\cdot)$ on $[0,\infty)$. The following theorem guarantees that there exists a minimizer of $G(\cdot)$ which can be derived by standard line-search techniques. 
  
\begin{theorem} \label{thm: upper bound} 
Let $\lambda^*\equiv\frac{\left(K_1-K_2K_3\right)^+}{K_3h}$. Then, $G(\cdot)$ is absolutely continuous unimodal function on $[0,\infty)$ with a minimizer in $\left[0,\lambda^*\right]$. 
\end{theorem}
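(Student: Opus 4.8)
The plan is to analyze $G$ via its derivative, exploiting that the two functions appearing in the second expression for $G$ depend on $\lambda$ only through the increasing quantities $A(\lambda)$ and $B(\lambda)$. Write $G=p/q$ with numerator $p(\lambda)=K_1+\frac{K_2}{2\mu\rho}A(\lambda)+\frac{h}{4\mu\rho}B(\lambda)$ and denominator $q(\lambda)=K_3+\frac{1}{2\mu\rho}A(\lambda)$. First I would settle absolute continuity. For each fixed $v$ the maps $\lambda\mapsto v(\lambda-v/2)^+$ and $\lambda\mapsto v[\lambda^2-(v/2)^2]^+$ are absolutely continuous, so Tonelli's theorem yields the integral representations $A(\lambda)=\int_0^\lambda\tilde a(s)\,{\rm d}s$ and $B(\lambda)=\int_0^\lambda 2s\,\tilde a(s)\,{\rm d}s$, giving $A'=\tilde a$ and $B'=2\lambda\tilde a$ almost everywhere, with $\tilde a(\lambda)\le EV<\infty$. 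Hence $p$ and $q$ are locally absolutely continuous; since $q\ge K_3>0$, the quotient $G=p/q$ is absolutely continuous on every bounded subinterval and the quotient rule holds a.e.

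The key step is the sign of $G'$. Differentiating gives $p'(\lambda)=\frac{\tilde a(\lambda)}{2\mu\rho}(K_2+h\lambda)$ and $q'(\lambda)=\frac{\tilde a(\lambda)}{2\mu\rho}$, so that
\[
G'(\lambda)=\frac{p'q-pq'}{q^2}=\frac{\tilde a(\lambda)}{2\mu\rho\,q(\lambda)^2}\,\psi(\lambda),\qquad \psi(\lambda):=(K_2+h\lambda)q(\lambda)-p(\lambda).
\]
The crux is to simplify $\psi$. The two $\frac{K_2}{2\mu\rho}A(\lambda)$ terms cancel, leaving $\psi(\lambda)=K_2K_3-K_1+hK_3\lambda+\frac{h}{4\mu\rho}\bigl(2\lambda A(\lambda)-B(\lambda)\bigr)$. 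Using the factorization $b(\lambda,v)=v(\lambda-v/2)^+(\lambda+v/2)$ valid for $\lambda,v\ge0$, a short computation gives $2\lambda A(\lambda)-B(\lambda)=E\bigl[V\bigl((\lambda-V/2)^+\bigr)^2\bigr]\ge0$, whence
\[
\psi(\lambda)=-(K_1-K_2K_3)+hK_3\lambda+\frac{h}{4\mu\rho}E\bigl[V\bigl((\lambda-\tfrac{V}{2})^+\bigr)^2\bigr].
\]
This identity is the heart of the argument: it shows $\psi$ is continuous and \emph{strictly increasing} (the term $hK_3\lambda$ has strictly positive slope since $h,K_3>0$) with $\psi(\lambda)\to\infty$.

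It then remains to read off unimodality and the location of the minimizer. Because $q>0$ and $\tilde a\ge0$, the sign of $G'$ equals that of $\psi$ wherever $\tilde a>0$ and is zero otherwise; as $\psi$ increases through at most one sign change, $G$ is nonincreasing on $[0,\lambda_0]$ and nondecreasing on $[\lambda_0,\infty)$ for $\lambda_0:=\inf\{\lambda\ge0:\psi(\lambda)\ge0\}$, which is exactly unimodality with minimizer $\lambda_0$. For the bound, dropping the nonnegative expectation term gives $\psi(\lambda)\ge-(K_1-K_2K_3)+hK_3\lambda$; evaluating at $\lambda^*=(K_1-K_2K_3)^+/(K_3h)$ and noting $hK_3\lambda^*=(K_1-K_2K_3)^+\ge K_1-K_2K_3$ yields $\psi(\lambda^*)\ge0$, so monotonicity of $\psi$ forces $\lambda_0\le\lambda^*$. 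The main obstacle I anticipate is the algebraic simplification of $2\lambda A-B$ together with the careful handling of the degenerate set where $\tilde a(\lambda)=0$ (equivalently where $\xi$ is flat), on which $G$ is locally constant; there one must check that $G$ stays nonincreasing/nondecreasing so that unimodality is preserved and the infimum defining $\lambda_0$ still delivers a genuine minimizer.
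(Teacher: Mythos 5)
Your proof is correct and follows the same overall strategy as the paper's: compute the a.e.\ derivative of $G$, factor it as a nonnegative multiple of a function that changes sign at most once (from negative to positive), and bound the crossing point by $\lambda^*$ after discarding a nonnegative term. Two details differ, both in your favor. First, the paper obtains absolute continuity by invoking Theorem 33 of Protter (a change-of-variables result for convex/BV functions composed with a smooth map), whereas you get it from the elementary integral representations $A(\lambda)=\int_0^\lambda\tilde a(s)\,{\rm d}s$ and $B(\lambda)=\int_0^\lambda 2s\,\tilde a(s)\,{\rm d}s$; this is more self-contained and yields the same a.e.\ derivative formula. Second --- and this is worth flagging --- the paper's displayed formula for the density $g$ contains the combination $2\lambda A(\lambda)+B(\lambda)$, while the correct quotient-rule algebra gives $2\lambda A(\lambda)-B(\lambda)$, exactly as in your $\psi$. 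The paper's plus sign appears to be a typo: with it, nonnegativity and monotonicity of the bracket are trivial, whereas with the correct minus sign one needs precisely your factorization $2\lambda a(\lambda,v)-b(\lambda,v)=v\bigl((\lambda-v/2)^+\bigr)^2$ to see that this term is nonnegative and nondecreasing in $\lambda$. The paper's conclusion is unaffected (both versions of the bracket change sign at most once and dominate $K_2K_3-K_1+hK_3\lambda$), but your write-up supplies the identity that the argument actually requires. Your explicit treatment of the degenerate set where $\tilde a(\lambda)=0$, on which $G$ is locally constant, is likewise a point the paper passes over silently.
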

\begin{proof}
For every $a,b\geq0$ define 

\begin{equation}
U(a,b)\equiv\frac{K_1+\frac{K_2}{2\mu\rho}a+\frac{h}{4\mu\rho}b}{K_3+\frac{a}{2\mu\rho}}
\end{equation}
and notice that 
\begin{equation}
G(\lambda)=U\left[A(\lambda),B(\lambda)\right]\ \ , \ \ \lambda\geq0\,.
\end{equation}	
In addition, observe that $a(\cdot)$ and $b(\cdot)$ are convex with respect to $\lambda$ and hence so are $A(\cdot)$ and $B(\cdot)$. Now, since $A(\cdot)$ and $B(\cdot)$ are convex, they are both functions of bounded variation. Therefore, since $U(\cdot)$ is differentiable on $[0,\infty)^2$, then Theorem 33 of page 81 in \cite{Protter2005} implies that $G(\cdot)$ has a density which equals to 
\begin{equation}\label{eq: chain rule}
g(\lambda)=U_a\left[A(\lambda),B(\lambda)\right]D^+A(\lambda)+U_b\left[A(\lambda),B(\lambda)\right]D^+B(\lambda)\ \ , \ \ \forall\lambda\geq0
\end{equation}
where $U_a,U_b$ are the partial derivatives of $U$ with respect to $a$ and $b$ and $D^+A,D^+B$ are the right-derivatives of $A(\cdot)$ and $B(\cdot)$ with respect to $\lambda(\cdot)$.
Note that from this it follows that $G(\cdot)$ is absolutely continuous on $\left[0,\infty\right)$.
Now, for every $v\geq0,\lambda\geq0$ the right derivatives of $a(\cdot)$ and $b(\cdot)$ with respect to $\lambda$ are given by
	\begin{align}
	&D^+a(\lambda,v)=v1_{[0,2\lambda]}(v)\,,\\& D^+b(\lambda,v)=2\lambda v1_{[0,2\lambda]}(v)\,.\nonumber
	\end{align}
	Therefore, recalling that $EV^2<\infty$, hence $EV<\infty$ and we have by dominated convergence that, for $\lambda>0$, 
	\begin{equation}
	\tilde{a}(\lambda)= D^+A(\lambda)=\frac{D^+B(\lambda)}{2\lambda}\,.
	\end{equation}
	Therefore, for every $\lambda>0$, $g(\lambda)$ is given by
	\begin{equation}\label{eq: g derivative}
	g(\lambda)=\frac{\frac{\tilde{a}(\lambda)}{2\mu\rho}\left[K_3K_2-K_1+\lambda K_3h+\frac{h}{4\mu\rho}\left[2\lambda A(\lambda)+B(\lambda)\right]\right]}{\left[K_3+\frac{A(\lambda)}{2\mu\rho}\right]^2}\,.
	\end{equation}
	Now, observe that $\tilde{a}(\lambda)$  and $2\lambda A(\lambda)+B(\lambda)$ are nonnegative and nondecreasing on $\left[0,\infty\right)$. Therefore, the numerator of $g(\cdot)$ can change its sign at most once.  Clearly, if $K_1\leq K_2K_3$, then $G(\cdot)$ is nondecreasing and nonnegative  in which zero is a minimizer. On the other hand, if $K_1>K_2K_3$, since the numerator converges to infinity as $\lambda\to\infty$, then there exists a minimizer which is bounded from above by $\lambda^*$, the solution of 
	\begin{equation}
	K_3K_2-K_1+\lambda K_3h=0
	\end{equation} 
	and the result follows.   
\end{proof}

\begin{remark} 
	\normalfont From \eqref{eq: g derivative}, it can be seen that for every $\lambda\in\left(0,\lambda^*\right)$, $g(\lambda)$ is bounded by 
	\begin{equation*}
	M\equiv\frac{\tilde{a}\left(\lambda^*\right)}{2K_3^2\mu\rho}\left[K_3K_2+K_1+\lambda^* K_3h+\frac{h}{4\mu\rho}\left[2\lambda^* A\left(\lambda^*\right)+B\left(\lambda^*\right)\right]\right]
	\end{equation*}
	 and hence $G(\cdot)$ is Lipschitz on $\left[0,\lambda^*\right]$ with a constant $M$. This can be useful for the purpose of error bounds. 
	\end{remark}
	\begin{remark}
		\normalfont It is easy to see that when the distribution of $V$ has no atoms then $G(\cdot)$ is continuously differentiable with $G'=g$ (see \eqref{eq: g derivative}). 
	\end{remark}
\section{Two queueing examples}\label{sec: examples}
\subsubsection*{Example 1:}
Consider an $M/M/1$ system such that at the beginning of every busy period the server observes the service demand of the first customer and determines a service rate for the rest of this cycle in order to minimize \eqref{optimization: general}. Let $\nu\in(0,\infty)$ be the arrival rate and assume that service demands are exponentially distributed with rate $\theta\in(0,\infty)$. Thus, in such a case, $V\sim\exp(\theta)$, $\tau\sim\exp(\nu)$, $\tilde{Z_t}=0,\forall t\geq0$,
$\rho=\frac{\nu}{\theta}$ and $\mu=\frac{1}{2}\left(1+\frac{\text{Var}(V)}{EV}\right)=\frac{1+\theta^{-1}}{2}$. For simplicity, assuming $r=\rho+1$, the resulting parameters of the model are
\begin{align}
K_1&= K+\frac{d}{\theta}\left(1+\frac{\nu}{\theta}\right)+\frac{h}{\theta^2}\left(\nu+2\right)\,,\\K_2&= \frac{1}{\theta}\left(d\nu+\frac{2h\nu}{\theta}\right)\,,\nonumber\\K_3&= \frac{1}{\nu}+\frac{1}{\theta}\,.\nonumber
\end{align}
In particular, since $V\sim\exp(\theta)$ then for every $\lambda\geq0$
\begin{align}
&A(\lambda)=\frac{\lambda}{\theta}\psi\left(2\lambda,2\right)-\frac{1}{\theta^2}\psi\left(2\lambda,3\right)\,,\\&B(\lambda)=\frac{\lambda^2}{\theta}\psi\left(2\lambda,2\right)-\frac{3}{2\theta^3}\psi\left(2\lambda,4\right)\nonumber
\end{align}
where $\psi(\cdot,n)$ is the cumulative distribution function of Erlang distribution with shape $n$ and rate $\theta$.  The following Figure 2 illustrates the results of Theorem \ref{thm: upper bound} regarding the graph of $G(\cdot)$ as a function of $\lambda$ when $\theta=\nu=1$. For example, the left plot presents a negative effect of setup cost on the optimal output rate. Namely, we consider $K=h=d=1$ (blue), $K=75,h=d=1$ (red) and $K=200,h=d=1$ (green). The central plot presents a positive effect of holding cost on the optimal output rate. That is, we consider $K=200,h=d=1$ (blue), $K=200,h=5,d=1$ (red) and $K=200,h=30,d=1$ (green). The right plot presents a negative effect of output capacity cost on the optimal output $g(\cdot)$. That is, we consider $K=h=d=1$ (blue), $K=1=h,d=30$ (red) and $K=h=1,d=100$. (green).
	
\begin{figure}[H]
	\centering
	\textbf{Figure 2}	
	 
	\includegraphics[scale=0.7]{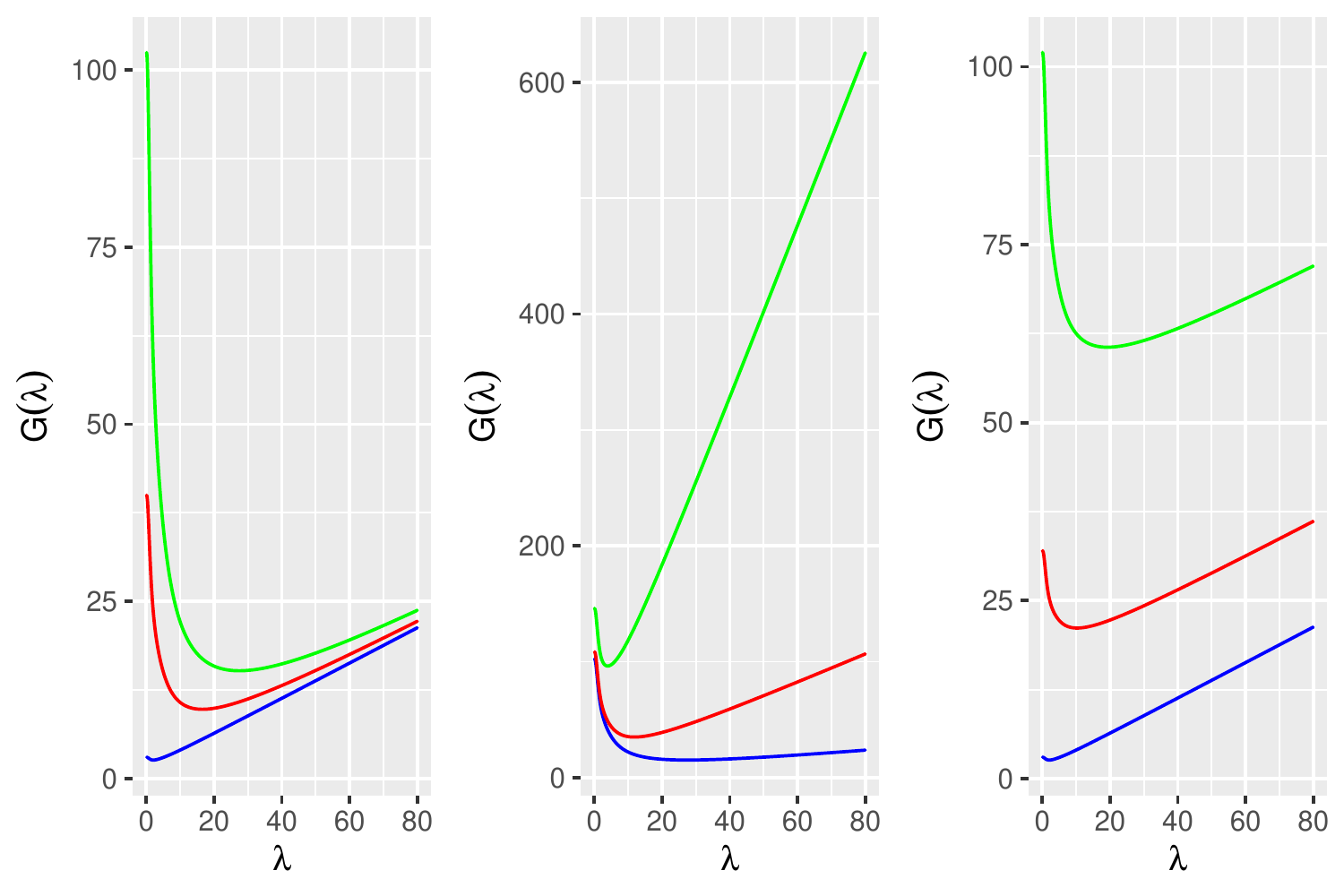}
		\label{fig: exponential}  
\end{figure}

\subsubsection*{Example 2:}
 Consider an $M/G/1$ system with a server who determines the service rate at the beginning of every busy period in order to minimize \eqref{optimization: general}. Let $\nu=\frac{1}{2}$ be the arrival rate and assume that service demands are uniformly distributed on $\left[0,1\right]$. Thus, in such a case, $V\sim U\left[0,1\right]$, $\tau\sim\exp\left(\frac{1}{2}\right)$, $\tilde{Z_t}=0,\forall t\geq0$, $\rho=1$ and $\mu=\frac{1}{2}\left(1+\frac{\text{Var}(V)}{EV}\right)=\frac{7}{12}$. In addition, for simplicity, we assume that $r=\rho+1$. Thus, 
 \begin{align}
 K_1&= K+d+\frac{5h}{8}\,,\\K_2&=d+\frac{7h}{6}\,,\nonumber\\K_3&=2.5\,.\nonumber
 \end{align}
 
 Since $V\sim U[0,1]$, then for every $0<\lambda\leq\frac{1}{2}$ we have that $V|\{V\leq2\lambda\}\sim U[0,2\lambda]$. Therefore,  
 \begin{equation}
 A(\lambda)=P\left(V\leq2\lambda\right)E\left[V\left(\lambda-\frac{V}{2}\right)\bigg|V\leq2\lambda\right]=\frac{2\lambda^3}{3}
 \end{equation}
 and similarly $B(\lambda)=\lambda^4$. In addition, for every $\lambda>\frac{1}{2}$, 
 \begin{equation}
 A(\lambda)=EV\left(\lambda-\frac{V}{2}\right)=\frac{\lambda}{2}-\frac{1}{6}
 \end{equation}
 and 
 \begin{equation}
 B(\lambda)=E\left(\lambda^2V-\frac{V^3}{4}\right)=\frac{\lambda^2}{2}-\frac{1}{16}\,.
 \end{equation}
 An insertion of these results into the definition of $g(\cdot)$ implies that

 \[ G(\lambda)=\begin{cases} 
 \frac{K_1+\frac{K_2}{3\mu\rho}\lambda^3+\frac{h}{4\mu\rho}\lambda^4}{K_3+\frac{\lambda^3}{3\mu\rho}} & 0\leq\lambda\leq\frac{1}{2} \\ \\
 \frac{K_1+\frac{K_2}{2\mu\rho}\left(\frac{\lambda}{2}-\frac{1}{6}\right)+\frac{h}{4\mu\rho}\left(\frac{\lambda^2}{2}-\frac{1}{16}\right)}{K_3+(2\mu\rho)^{-1}\left(\frac{\lambda}{2}-\frac{1}{6}\right)} & \frac{1}{2}<\lambda<\infty
 \end{cases}\,.
 \]
 The following Figure 3 illustrates the graph of $G(\cdot)$ as a function of $\lambda$ when the costs are varying. The left plot presents a negative effect of setup cost on the optimal output rate. Namely, we consider $K=h=d=1$ (blue), $K=75,h=d=1$ (red) and $K=200,h=d=1$ (green). The central plot presents a positive effect of holding cost on the optimal output rate. That is, we consider $K=200,h=d=1$ (blue), $K=200,h=5,d=1$ (red) and $K=200,h=10,d=1$ (green). The right plot presents a positive effect of output capacity cost rate on the optimal output $G(\cdot)$. That is, we consider $K=200,h=d=1$ (blue), $K=200,h=1,d=50$ (red) and $K=200,h=1,d=100$ (green). Interestingly, unlike the case presented by Figure 2 , Figure 3 presents a case when the output capacity cost rate $d$ has a positive effect on the optimal output rate. 
 
 \begin{figure}[H] 
 	\centering
 	\textbf{Figure 3} 
 	
 	\includegraphics[scale=0.7]{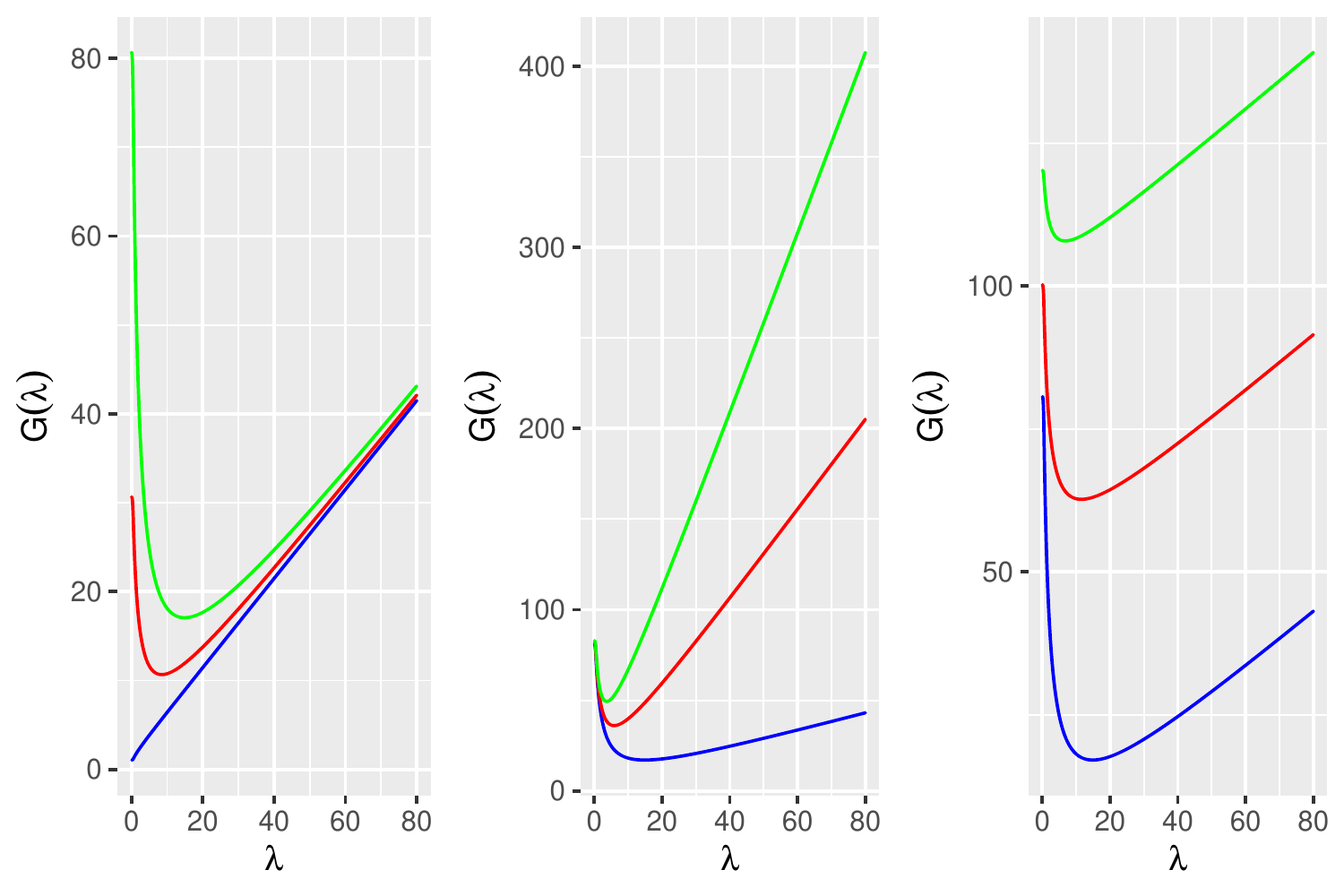}
 	\label{fig: uniform}   
 \end{figure}
 The following Figure 4, presents the same graphs which were sketched in Figure 3 on the interval $[0,1.3]$. Especially, notice that all the graphs are smooth at $0.5$. In addition, Figure 4 demonstrates the fact that the graph of $G(\cdot)$ can be neither convex nor concave. 
 
 \begin{figure}[H] 
 	\centering
 	\textbf{Figure 4}
 	
 	\includegraphics[scale=0.7]{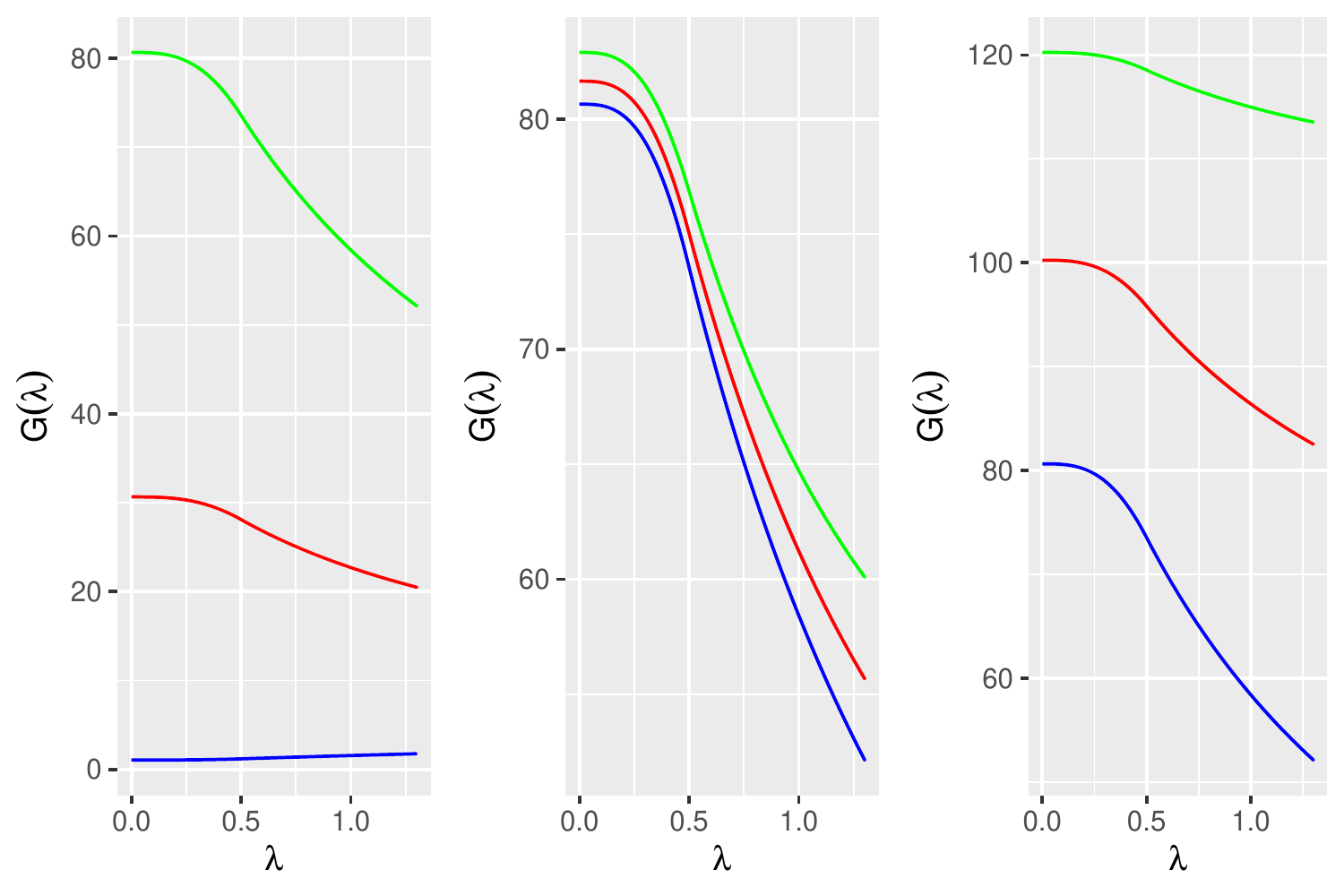}
 	\label{fig: uniform}   
 \end{figure}
 
\section{Optimization with partial information}\label{sec: partial information}
So far the assumption was that the decision maker knows the exact value of $V$ when she has to pick the output rate. To see a slightly different point of view regarding this model, consider the case where $V=\sum_{k=1}^N\tilde{V}_i$ such that $\tilde{V}_1,\tilde{V}_2,\ldots$ are i.i.d non-negative random variables and $N$ is some discrete random variable which is independent of $\{\tilde{V}_i\}_{i=1}^\infty$. In particular, we assume that $E\tilde{V_1}=\delta$ and $\text{Var}(\tilde{V}_1)=\sigma^2$. In the context of queueing systems, $N$ is interpreted as the number of customers in the system when the service rate is chosen. Now, in this part the assumption is that only $N$ is observed (and not the service times) when the output rate is picked. That is, we assume that $(N,R)$ and $\tilde{V}_1,\tilde{V}_2,\ldots$ are independent. Observe that the laws of total expectation and variance imply that 
\begin{align}\label{eq: total expectation}
\frac{1}{2}&E\frac{V^2}{R-\rho}+\mu\rho E\frac{V}{\left(R-\rho\right)^2}\nonumber\\&=\frac{1}{2}\left\{\text{Var}\left(\frac{V}{\sqrt{R-\rho}}\right)+\left(E\frac{V}{\sqrt{R-\rho}}\right)^2\right\}\\&+\mu\rho\delta E\frac{N}{\left(R-\rho\right)^2}\nonumber\\&=\frac{\sigma^2}{2}E\frac{N}{R-\rho}+\frac{\delta^2}{2}E\frac{N^2}{R-\rho}+\mu\rho\delta E\frac{N}{\left(R-\rho\right)^2}\,.\nonumber
\end{align}
Thus, an insertion of \eqref{eq: total expectation} in \eqref{eq: C(R) definition} provides an expression with numerator
\begin{align}
&hE\tau E\tilde{Z}^\tau+K+d\delta EN+\\&\left(d\rho\delta+\frac{h\sigma^2}{2}\right) E\frac{N}{R-\rho}+h\left(\frac{\delta^2}{2}E\frac{N^2}{R-\rho}+\mu\rho\delta E\frac{N}{(R-\rho)^2}\right)\nonumber
\end{align}
and denominator $E\tau+\delta E\frac{N}{R-\rho}$. 
Thus, an appropriate change of variable implies that this minimization is analogous to a special case of \eqref{optimization: general} with  $V$ which has a discrete distribution with a support which is a subset of $\left\{1,2,\ldots\right\}$.

\subsection{When $V$ has a discrete distribution} \label{subsec: discrete}
As explained in the previous part of this section, now $V$ models the number of customers in the system at the beginning of an on period. Therefore, we assume that $V$ is a random variable which receives values on the set $\left\{1,2,\ldots\right\}$ with probabilities
\begin{equation}
p_i\equiv P\left(V=i\right)\ \ , \ \ \forall i=1,2,\ldots
\end{equation}
such that $\sum_{i=1}^\infty p_i=1$. In such a case, for every $\lambda\geq0$  
\begin{equation}
A(\lambda)=\sum_{i=1}^\infty p_ii\left(\lambda-\frac{i}{2}\right)^+=\sum_{i=1}^{\lfloor2\lambda\rfloor} p_ii\left(\lambda-\frac{i}{2}\right)
\end{equation} 
and similarly
\begin{equation}
B(\lambda)=\sum_{i=1}^{\lfloor2\lambda\rfloor}p_ii\left(\lambda^2-\frac{i^2}{4}\right)\,.
\end{equation}
Therefore, for every $\lambda\geq0$ 
\begin{equation}
G(\lambda)=\frac{K_1+\sum_{i=1}^{\lfloor2\lambda\rfloor}p_i\left[\frac{K_2}{2\mu\rho}i\left(\lambda-\frac{i}{2}\right)+\frac{hi}{4\mu\rho}\left(\lambda^2-\frac{i^2}{4}\right)\right]}{K_3+(2\mu\rho)^{-1}\sum_{i=1}^{\lfloor2\lambda\rfloor}p_ii\left(\lambda-\frac{i}{2}\right)}\,.
\end{equation}
Now, by Theorem \ref{thm: upper bound} an efficient line search method can be used in order to find $i\in\left\{\frac{1}{2},\frac{2}{2},\frac{3}{2},\ldots\right\}$ such that the optimal $\lambda$ belongs to $I=[i-\frac{1}{2},i)$. Then, for every $\lambda\in I$,  
\begin{equation}
G(\lambda)=\frac{S_i\lambda^2+T_i\lambda+U_i}{Q_i+W_i\lambda}\equiv G_i(\lambda)
\end{equation}
where 
\begin{align}
&S_i=\frac{h}{4\mu\rho}\sum_{j=1}^{2i-1}p_jj\,,\nonumber\\& T_i=\frac{K_2}{2\mu\rho}\sum_{j=1}^{2i-1}p_jj\,,\nonumber\\&U_i=K_1-\sum_{j=1}^{2i-1}j^2\left(\frac{K_2}{4\mu\rho}+\frac{hj}{16\mu\rho}\right)\,,\\&Q_i=K_3-\frac{1}{4\mu\rho}\sum_{j=1}^{2i-1}p_jj^2\,,\nonumber\\& W_i=(2\mu\rho)^{-1}\sum_{j=1}^{2i-1}p_jj\nonumber
\end{align}
with the convention that an empty sum equals zero. Finally, it is left to minimize $G_i(\cdot)$ on $I$. This can be done analytically and hence, unlike the general case, when only partial information is available, the optimal output rate can be calculated efficiently with no error at all. 

\newpage
\end{document}